\title{\bf An analogue of Ryser's Theorem for partial Sudoku squares}
\author{{\Large P. J. Cameron} \thanks{School of Mathematical Sciences, Queen Mary, University of London, Mile End Road, London E1 4NS, U.K. Email: {\tt p.j.cameron@qmul.ac.uk} }
\and {\Large A. J. W. Hilton} \thanks{Department of Mathematics and Statistics, University of Reading, Whiteknights, Reading RG6 6AX, U.K. \textit{and} School of Mathematical Sciences, Queen Mary, University of London, Mile End Road, London E1 4NS, U.K. Email: {\tt a.j.w.hilton@reading.ac.uk}}
\and {\Large E. R. Vaughan} \thanks{School of Mathematical Sciences, Queen Mary, University of London, Mile End Road, London E1 4NS, U.K. Email: {\tt e.vaughan@qmul.ac.uk}}
}
\date{\today}
\newtheorem{theorem}{Theorem}
\newtheorem{lemma}[theorem]{Lemma}
\theoremstyle{definition}
\newtheorem{problem}{Problem}
\providecommand{\size}[1]{\left|#1\right|}
\providecommand{\floor}[1]{\lfloor#1\rfloor}
\providecommand{\ceiling}[1]{\lceil#1\rceil}
\providecommand{\col}{\mathcal{C}}
\providecommand{\hc}{Hall's Condition}
\providecommand{\hi}{Hall Inequality}
\providecommand{\pqs}{$(p,q)$-Sudoku}
\providecommand{\rts}{$r \times s$}
\providecommand{\ptq}{$p \times q$}
\providecommand{\ntn}{$n \times n$}
\providecommand{\otn}{$\{1, 2, \dots, n\}$}
\begin{document}

\maketitle

\begin{abstract}
In 1956 Ryser gave a necessary and sufficient condition for a partial latin rectangle to be completable to a latin square. In 1990 Hilton and Johnson showed that Ryser's condition could be reformulated in terms of \hc\ for partial latin squares. Thus Ryser's Theorem can be interpreted as saying that any partial latin rectangle $R$ can be completed if and only if $R$ satisfies \hc\ for partial latin squares.

We define \hc\ for partial Sudoku squares and show that \hc\ for partial Sudoku squares gives a criterion for the completion of partial Sudoku rectangles that is both necessary and sufficient. In the particular case where $n=pq$, $p|r$, $q|s$, the result is especially simple, as we show that any $r \times s$ partial \pqs\ rectangle can be completed (no further condition being necessary).
\end{abstract}

\section{Introduction}

A \textit{latin square of order $n$} is an $n \times n$ array with entries from \otn\ in which each symbol occurs exactly once in each row and column. An \rts\ latin rectangle of order $n$ is an \rts\ array in which each cell is filled from the set \otn\ of symbols and no symbol occurs more than once in any row or column. In 1956 Ryser \cite{ryser} proved the following well-known theorem.

\begin{theorem} Let $R_l$ be an \rts\ latin rectangle on the symbols \otn. Then $R_l$ can be completed to form an \ntn\ latin square on the symbols \otn\ if and only if
\begin{align*}
N(i) \ge r + s - n 
\end{align*} for all $1 \le i \le n$, where $N(i)$ is the number of times the symbol $i$ occurs in $R_l$. \label{ryser} \end{theorem}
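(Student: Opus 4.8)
The plan is to establish necessity by a direct count and sufficiency by completing $R_l$ in two stages, each reduced to a classical matching result.

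\textbf{Necessity.} First I would assume $R_l$ has already been completed to an $n \times n$ latin square $L$ and examine the block $B$ consisting of the cells in rows $1,\dots,r$ and columns $s+1,\dots,n$. In $L$ each symbol $i$ occurs exactly once in each row, so in each of the first $r$ rows the symbol $i$ lies either inside $R_l$ or inside $B$. Since $R_l$ is a latin rectangle, $i$ occupies exactly $N(i)$ of these rows within $R_l$, and hence occupies exactly $r-N(i)$ of them within $B$. As $B$ has only $n-s$ columns and $i$ can appear at most once in each column, we obtain $r-N(i)\le n-s$, which rearranges to $N(i)\ge r+s-n$.

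\textbf{Sufficiency.} Conversely, assume $N(i)\ge r+s-n$ for every symbol. I would complete $R_l$ in two stages: first extend it to an $r\times n$ latin rectangle $R'$ by filling the block $B$, and then extend $R'$ to an $n\times n$ latin square. For the first stage, form the bipartite graph $H$ whose vertex classes are the rows $\{1,\dots,r\}$ and the symbols $\{1,\dots,n\}$, joining row $j$ to symbol $i$ precisely when $i$ does not yet occur in row $j$ of $R_l$. Each row is missing exactly $n-s$ symbols, so every row-vertex has degree $n-s$; each symbol $i$ is missing from exactly $r-N(i)$ rows, so its degree is $r-N(i)$, and the hypothesis $N(i)\ge r+s-n$ says precisely that this degree is at most $n-s$. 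Thus $\Delta(H)=n-s$, attained at every row-vertex. By K\"onig's edge-colouring theorem $H$ has a proper edge-colouring with $n-s$ colours; since each row-vertex has degree equal to the number of colours, every colour appears exactly once at each row, so each colour class is a perfect matching on the row side. Assigning the $n-s$ colour classes to the columns $s+1,\dots,n$ fills $B$: the class placed in column $c$ tells us to enter symbol $i$ in cell $(j,c)$ for each edge $ji$ of that class. Properness guarantees that no symbol repeats in a row (distinct colours at each symbol) or in a column (each class is a matching), so $R'$ is a latin rectangle.

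The second stage is the classical fact that every $r\times n$ latin rectangle extends to an $n\times n$ latin square; this follows from the analogous edge-colouring argument applied to the columns (or by repeatedly choosing a new row as a system of distinct representatives via Hall's theorem), since in $R'$ every symbol occurs exactly $r$ times and so is absent from exactly $n-r$ columns. I expect the crux of the argument to be the first stage: the real work lies in recognising that the completion of $B$ is exactly an edge-colouring problem and that Ryser's inequality is the precise condition forcing the maximum degree of $H$ down to $n-s$, so that K\"onig's theorem yields colour classes which are full transversals of the rows rather than merely partial matchings.
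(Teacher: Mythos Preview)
The paper does not actually prove this statement: Theorem~\ref{ryser} is quoted as Ryser's 1956 result with a citation, and the paper's own arguments concern the Sudoku analogues (Theorems~\ref{simpleryser} and~\ref{complicatedryser}). So there is no ``paper's proof'' to compare against here.

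That said, your argument is correct and is essentially the classical proof. The necessity count is the standard one, and your sufficiency argument---building the bipartite row/symbol graph, observing that Ryser's inequality is exactly the condition $\Delta(H)=n-s$, applying K\"onig's edge-colouring theorem to obtain $n-s$ colour classes that are perfect matchings on the row side, and then invoking the $r\times n$ extension result---is the usual route. It is worth noting that the paper's proof of its Sudoku analogue (Theorem~\ref{simpleryser}) uses a closely related mechanism: it too forms a bipartite row/symbol graph and equitably edge-colours it (Lemma~\ref{equitable}) to distribute symbols across new columns, before finishing with the Amalgamated Latin Square Theorem (Theorem~\ref{alst}) in place of the $r\times n$ extension step. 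Your approach and the paper's method for the Sudoku case are thus the same idea in slightly different clothing; the paper simply does not rehearse it for the classical Ryser statement.
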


\providecommand{\onedash}{Theorem $\mathrm{\ref{ryser}'}$}
\providecommand{\onedashitalic}{Theorem $\ref{ryser}'$}
\providecommand{\onedashbold}{Theorem $\mathbf{\ref{ryser}'}$}

\newtheorem*{theorem1dash}{\onedashbold}

If $n=pq$, a \textit{\pqs}\ square is a latin square of order $n$ which is partitioned into \ptq\ rectangles, each containing a set
\[ \left\{ (xp + i, yq + j) \; :  \; 1 \le i \le p , \, 1 \le j \le q \right\} \]
of cells for some $x$ and $y$, where $0 \le x \le q - 1$, $0 \le y \le p - 1$, and which have the property that each of these \ptq\ rectangles contains each of the symbols $1, 2, \dots, n$ exactly once. (See Figure \ref{square}.)

\begin{figure}
\begin{center} \begin{tikzpicture}
\begin{scope}[scale=1.4]
\draw[gray,xstep=12mm, ystep=9mm] (0,0) grid (3.6, 3.6);
\draw[black, <->] (0,-0.2) -- node[below] {$n$} (3.6,-0.2);
\draw[black, <->] (3.8,0) -- node[right] {$n$} (3.8,3.6);
\draw[black, <->] (0,3.4) -- node[below] {$q$} (1.2,3.4);
\draw[black, <->] (1.4,3.6) -- node[right] {$p$} (1.4,2.7);
\end{scope}
\end{tikzpicture} \end{center} \caption{A \pqs\ square.} \label{square}
\end{figure}

The puzzle game \emph{Sudoku} was invented by Howard Garns in 1979 \cite{bcc}. The puzzle consists of a partial Sudoku square, in which a few of the cells are preassigned, and the objective is to complete the partial Sudoku square to a Sudoku square. Usually the puzzles found in newspapers and magazines use $(3,3)$-Sudoku squares, although occasionally other types are used, for example $(2,3)$- or $(3,4)$-Sudoku squares.

An \rts\ \pqs\ rectangle on $n=pq$ symbols is an \rts\ latin rectangle which is partitioned into \ptq\ rectangles plus, if $p \nmid r$ and $q \nmid s$, along the right hand border, rectangles of size $p \times (s - s^*)$ and along the lower border, rectangles of size $(r - r^*) \times q$, and finally a rectangle of size $(r - r^*) \times (s - s^*)$, where $r^* = \floor{r/p}p$ and $s^* = \floor{s/q}q$, as illustrated in Figure \ref{rectangle}.

Our first analogue for Sudoku squares of Ryser's Theorem is the following simple result.

\begin{theorem} Let $p|r$, $q|s$ and $n=pq$. Any \rts\ \pqs\ rectangle $R_s$ can be extended to an \ntn\ \pqs\ square. \label{simpleryser} \end{theorem}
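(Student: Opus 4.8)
The plan is to build the square by repeatedly adjoining complete blocks of lines, reducing everything to one elementary operation. Since $p\mid r$ and $q\mid s$, the rectangle $R_s$ consists of $a=r/p$ bands of $p$ rows and $b=s/q$ stacks of $q$ columns, meeting in $ab$ fully-filled \ptq\ boxes; note $a\le q$ and $b\le p$. I would first extend $R_s$ rightwards, one stack of $q$ columns at a time, to an \rts\ rectangle with $s=n$ (the top $a$ bands complete), and then extend downwards, one band of $p$ rows at a time, to the full \ntn\ square. Transposing the array interchanges the roles of $p,q$ and of rows and columns, and turns a stack into a band; so both extension steps are instances of a single lemma: an \rts\ \pqs\ rectangle with $p\mid r$, $q\mid s$ and $s<n$ can be extended to an $r\times(s+q)$ \pqs\ rectangle. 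It therefore suffices to prove this lemma and iterate.

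The engine driving the argument is the uniformity forced by the divisibility hypotheses. Each \ptq\ box contains every symbol exactly once, so in $R_s$ every symbol occurs exactly $rs/n$ times. In particular the hypothesis of Ryser's Theorem (Theorem~\ref{ryser}) is met automatically, since
\[ \frac{rs}{n}-(r+s-n)=\frac{(n-r)(n-s)}{n}\ge 0 . \]
More to the point, restrict attention to a single band of $p$ rows: that band is a $p\times s$ Sudoku rectangle containing each symbol exactly $b$ times, in $b$ distinct rows, so every symbol is \emph{absent} from exactly $p-b$ of the band's rows. This uniform slack is exactly what will make the relevant marriage condition hold.

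To adjoin a new stack (columns $s+1,\dots,s+q$) across the top $a$ bands I would proceed in two stages. First decide, for each of the $a$ new boxes, which $p$ symbols are to occupy each of its $q$ columns, that is, partition \otn\ into $q$ blocks of size $p$ for each box, so that the blocks assigned to a fixed column index in the $a$ different boxes are pairwise disjoint (guaranteeing that the $r$ entries of each new column are distinct). Because $a\le q$, such an equitable, column-disjoint assignment exists and can be written down explicitly, for example by cyclic shifts of a fixed partition. Second, within each box independently, place the chosen symbols into the $p$ rows of its band so that the symbol put in a given row does not already occur in that row; within a single column of a box this is a perfect matching between the $p$ rows and the $p$ chosen symbols, with $\sigma$ joined to row $i$ precisely when $\sigma$ is absent from row $i$, and Hall's Marriage Theorem supplies it provided the marriage condition holds.

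The main obstacle, and the real content, is that the two stages cannot be chosen independently: a careless first stage can place into one column a block of $p$ symbols all already present in a common row, so that Hall's condition fails at the second stage. The crux is therefore to make the first-stage choice compatibly with the existing rows and to verify the marriage condition; here one uses that each symbol is absent from exactly $p-b$ rows of each band, and dually that each row is missing exactly $n-s$ symbols, which furnishes precisely enough slack to meet Hall's condition for every subset of symbols. This is the same tight-but-satisfied phenomenon underlying Ryser's inequality above, and is exactly the assertion that \hc\ for partial Sudoku squares holds automatically under the divisibility hypotheses; the reductions of the first paragraph and the explicit first stage are routine, so I expect this verification to be where the work lies.
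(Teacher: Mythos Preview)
Your plan has a genuine gap at exactly the point you yourself flag as ``the real content'': you never explain how to choose the first-stage column partitions so that the second-stage matchings exist, and the suggestion you do make---cyclic shifts of a fixed partition---fails. Take $p=q=2$, $n=4$, $r=s=2$, with the single filled box
\[
\begin{array}{cc} 1 & 2\\ 3 & 4 \end{array}.
\]
If the first stage assigns $C_1=\{1,2\}$ to the first new column, then both symbols of $C_1$ already sit in row~1, so no symbol of $C_1$ can be matched to row~1 and Hall's condition fails. The ``uniform slack'' you invoke (each symbol absent from exactly $p-b$ rows of the band, each row missing exactly $q(p-b)$ symbols) is a count of non-incidences; it does not prevent all of them from concentrating in too few rows for a particular $C_j$, which is precisely what happens here. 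So the crux you identify is not a routine verification but a genuine existence problem for the first-stage choice, and your proposal gives no mechanism to solve it.

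The paper avoids this coupling by reversing the order of commitments and using heavier tools. Rather than fixing column contents first, it works band by band and distributes the symbols missing from each small row among the \emph{big} columns (stacks) via an equitable edge-colouring of the row--symbol bipartite graph: since every symbol has degree exactly $(n-s)/q$ and there are $(n-s)/q$ colour classes, each symbol lands once in each stack automatically, so the box condition is met without any Hall-type verification. This produces an outline latin square, and the Amalgamated Latin Square Theorem then handles the refinement to individual columns (and the remaining bands) in one stroke. The point is that the equitable colouring simultaneously respects the row constraints and spreads symbols evenly across stacks; your column-first decomposition separates these two requirements and then has to reconcile them, which is where your argument stalls.
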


Thus there is no let or hindrance to the extension; in this respect Theorem \ref{simpleryser} is simpler than Ryser's Theorem.

The second analogue of Ryser's Theorem is necessarily a bit more complicated than Ryser's Theorem itself. In order to describe it we first define some associated bipartite graphs $S_\alpha$ for $\alpha = 1, 2, \dots, (r^*/p) + 1$ and $B_\beta$ for $\beta = 1, 2, \dots, (s^*/q) + 1$. Here $S$ stands for ``side'' (of the rectangle), and $B$ stands for ``bottom'' (of the rectangle). These graphs are only defined if $p \nmid r$ or $q \nmid s$ respectively.

Firstly, if $p \nmid r$, graphs $S^*_\alpha$ for $\alpha = 1, \dots, r^*/p$ are defined with vertices
\[ v_{(\alpha - 1)p + 1}, \dots, v_{\alpha p} \]
(corresponding to the rows $\rho_{(\alpha - 1)p + 1}, \dots, \rho_{\alpha p}$) and vertices $w_1, \dots, w_n$ (corresponding to the symbols $1, \dots, n$). The edge $v_{(\alpha - 1)p + i} w_j$ is placed in $S^*_\alpha$ if symbol $j$ does not occur in row $\rho_{(\alpha - 1)p + i}$ of $R_s$. Then, for each $\alpha$, $1 \le \alpha \le r^*/p$, from $S^*_\alpha$ we construct $S_\alpha$ by replicating vertex $v_i$ $q - (s - s^*)$ times, the replications being denoted by $v_{ij}$ ($1 \le j \le q - (s - s^*)$), and by joining each replicated vertex of $v_i$ to each of the vertices $w_1, \dots, w_n$ that was joined to $v_i$ in $S^*_\alpha$.

\begin{figure}
\begin{center} \begin{tikzpicture}
\begin{scope}[scale=1.4]
\draw[gray,step=10mm] (0,-0.5) grid (4.5, 3) (0,-0.5) -- (4.5,-0.5) -- (4.5, 3);
\draw[black, <->] (0,3.2) -- node[above] {$s^*$} (4,3.2);
\draw[black, <->] (0,-0.7) -- node[below] {$s$} (4.5,-0.7);
\draw[black, <->] (4.7,-0.5) -- node[right] {$r$} (4.7,3);
\draw[black, <->] (-0.2,0) -- node[left] {$r^*$} (-0.2,3);
\draw[black, <->] (0,2.8) -- node[below] {$q$} (1,2.8);
\draw[black, <->] (1.2,2) -- node[right] {$p$} (1.2,3);
\end{scope}
\end{tikzpicture} \end{center} \caption{An \rts\ \pqs\ rectangle.} \label{rectangle}
\end{figure}

For $\alpha = (r^*/p) + 1$, $S^*_\alpha$ is defined similarly. The vertices are
\[ v_{(\alpha - 1)p + 1} = v_{r^* + 1}, \dots, v_r \]
(corresponding to the rows $\rho_{r^* + 1}, \dots, \rho_r$) and vertices $w_1, \dots, w_n$. An edge $v_{r^* + i} w_j$ is placed in $S^*_\alpha$ if symbol $j$ does not occur in row $\rho_{r^* + i}$. From $S^*_\alpha$ we construct $S_\alpha$ by replicating vertex $v_i$ $q - (s - s^*)$ times, the replicated vertices being denoted by $v_{ij}$ ($1 \le j \le q - (s - s^*)$), and by joining each replicated vertex to each of the vertices $w_1, \dots, w_n$ that was joined to $v_i$ in $S^*_\alpha$.

If $q \nmid s$, the bipartite graphs $B_\beta$ ($1 \le \beta \le (s^*/q) + 1$) are defined in a similar way, but with columns and rows being interchanged, and with $p$, $r$, $r^*$ interchanged with $q$, $s$, $s^*$.

We are now able to state the second, more complicated analogue of Ryser's Theorem.

\begin{theorem} Let $n=pq$. An \rts\ \pqs\ rectangle $R_s$ can be extended to an \ntn\ \pqs\ square if and only if, when $r \nmid p$, for $1 \le \alpha \le \ceiling{r/q}$, the graph $S_\alpha$ has a matching from the replicated row vertices into the symbol vertices, and, when $s \nmid q$, for $1 \le \beta \le \ceiling{s/q}$, the graph $B_\beta$ has a matching from the replicated column vertices into the symbol vertices. \label{complicatedryser} \end{theorem}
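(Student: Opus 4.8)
The plan is to prove the two implications separately and to reduce the harder (sufficiency) direction to Theorem \ref{simpleryser}. Concretely, I would use the matchings to complete the partial boxes along the right and lower borders of $R_s$, thereby extending $R_s$ to a partial \pqs\ rectangle whose two dimensions are multiples of $p$ and of $q$; Theorem \ref{simpleryser} then finishes the extension to an \ntn\ \pqs\ square with no further condition. Since transposing the array interchanges $p,r,r^*$ with $q,s,s^*$ and the side graphs $S_\alpha$ with the bottom graphs $B_\beta$, I would argue everything for the side graphs and obtain the statements for the $B_\beta$ by symmetry; only the bottom-right corner, which is governed by both families at once, needs separate treatment.

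\textbf{Necessity.} Suppose $R_s$ extends to an \ntn\ \pqs\ square $L$. Fix a band $\alpha$ and examine its rightmost box in $L$, occupying the rows $\rho_{(\alpha-1)p+1},\dots$ and the columns $s^*+1,\dots,s^*+q$. The cells of this box lying outside $R_s$ form a $p\times(q-(s-s^*))$ block. For each such cell I read off the symbol that $L$ places there; this assigns a symbol to every replicated vertex $v_{ij}$ of $S_\alpha$. The row condition on $L$ forces each assigned symbol to be absent from the corresponding row of $R_s$, so each assignment is a genuine edge of $S_\alpha$, while the box condition on $L$ forces the assigned symbols to be pairwise distinct. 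Hence the assignment is a matching of $S_\alpha$ saturating the replicated row vertices, and the transposed argument produces the required matching in each $B_\beta$. Thus the stated conditions are necessary.

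\textbf{Sufficiency.} Given the matchings, I would complete the side strip column by column. For each band $\alpha$ the matching in $S_\alpha$ prescribes which missing symbols are to be inserted into band $\alpha$'s rightmost box; once that box contains all $n$ symbols exactly once, its internal column condition is automatic. The difficulty is to place these insertions consistently across the bands so that each of the $q-(s-s^*)$ newly created columns receives pairwise distinct symbols over all filled rows. I would encode this as an edge-colouring problem: form the bipartite multigraph whose parts are the filled rows and the symbols, with one edge for each insertion demanded by the matchings, and apply K\"onig's edge-colouring theorem to colour its edges with $q-(s-s^*)$ colours so that no colour repeats at a vertex. Each colour class then serves as one new column, the absence of a repeated colour at a symbol vertex giving the Latin column property and the absence of a repeat at a row vertex being already built into the matchings. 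Completing the lower strip by the transposed construction and reconciling the two fillings at the corner yields a partial \pqs\ rectangle with dimensions divisible by $p$ and $q$ satisfying all row, column and box conditions, to which Theorem \ref{simpleryser} applies.

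\textbf{The main obstacle.} I expect essentially all of the work to lie in this consistency step, for two reasons. First, the hypothesis supplies only a matching of $S_\alpha$ into \emph{all} symbol vertices, whereas a valid box-completion must use precisely the symbols missing from that box; I must show that a saturating matching into all symbols can be traded for one using exactly the missing symbols, a Hall-type deficiency argument that exploits the fact that the total number of replicated vertices equals the number of symbols missing from the box. Second, the bottom-right corner is constrained simultaneously by a side graph and a bottom graph, so the independently constructed side and bottom fillings must be made to agree there. Verifying that the degree bounds needed for K\"onig's theorem survive this coupling, so that the two strips can be filled \emph{simultaneously} rather than merely separately, is the crux on which the whole reduction to Theorem \ref{simpleryser} rests.
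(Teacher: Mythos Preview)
Your overall shape---necessity by reading off a matching from a completed square, sufficiency by using the matchings to fill the partial boxes and then reducing to the divisible case---matches the paper's plan. But the paper does \emph{not} reduce to Theorem~\ref{simpleryser}. Instead it mimics the \emph{proof} of Theorem~\ref{simpleryser}: the matchings are used only to decide which symbols land in each \emph{medium cell} (a horizontal $1\times(q-(s-s^*))$ strip inside a big cell), never in which small cell; the remaining medium cells are filled by the equitable edge-colouring argument; the unfilled $(p-(r-r^*))\times(q-(s-s^*))$ corner of the critical big cell is left as a single amalgamated block; and the whole configuration is then checked to be an $(S,T,U)$-outline latin square, so that Theorem~\ref{alst} disaggregates it into a genuine Sudoku square. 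Cross-band column consistency is thus delegated entirely to Theorem~\ref{alst} and never confronted directly.

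Your route via K\"onig and Theorem~\ref{simpleryser} is where the gap lies. To invoke K\"onig with $q-(s-s^*)$ colours you need every vertex of your row--symbol multigraph to have degree at most $q-(s-s^*)$. Row vertices do, but a symbol $j$ that happens not to occur in columns $s^*+1,\dots,s$ of $R_s$ will lie in $M_\alpha$ for every full band $\alpha$, and nothing in the per-band matching hypotheses bounds the number of such bands by $q-(s-s^*)$. Worse, the columns $s^*+1,\dots,s$ are already filled, so what you really face is a \emph{pre-coloured} edge-colouring problem with $q$ colours, $s-s^*$ of them partially used; K\"onig alone does not solve precoloured extension. The corner compounds this: to land on a genuine $(r^*+p)\times(s^*+q)$ Sudoku rectangle you must also fill the $(p-(r-r^*))\times(q-(s-s^*))$ block that neither $S_{(r^*/p)+1}$ nor $B_{(s^*/q)+1}$ addresses, and this block is simultaneously constrained by the newly created rows and columns. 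The paper avoids all of this by never choosing columns: if you abandon the reduction to Theorem~\ref{simpleryser} and work with medium cells and Theorem~\ref{alst} instead, the column and corner issues dissolve. Your obstacle~(a)---trading an arbitrary saturating matching for one hitting exactly the symbols missing from the box---is a genuine point that the paper glosses over, and it does require a short Hall-type argument; but it is not the crux.
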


\providecommand{\threedash}{Theorem $\mathrm{\ref{complicatedryser}'}$}
\providecommand{\threedashitalic}{Theorem $\ref{complicatedryser}'$}
\providecommand{\threedashbold}{Theorem $\mathbf{\ref{complicatedryser}'}$}

\newtheorem*{theorem3dash}{\threedashbold}

We have not been able to express these two conditions in the very simple way that Ryser's Theorem is expressed. Of course the existence of a matching can be expressed in terms of the satisfaction of a collection of inequalities---as Hall showed \cite{hall} in 1935. (In this paper we reserve the terms ``\hc'' and ``Hall Inequalities'' for three more general analogous situations, one for partially coloured graphs, one for partial latin squares, and the third for partial Sudoku squares.)

We now turn to the definitions of \hc\ for graphs, \hc\ for partial latin squares, and \hc\ for partial Sudoku squares.

Let $G$ be a finite simple graph. Let $\col$ be a set of colours, and let $L : V(G) \rightarrow 2^\col$, the family of subsets of $\col$, be a \textit{list assignment} to $G$, so that each vertex is assigned a ``list'' or finite set of colours from $\col$. A function $f : V(G) \rightarrow \col$ is a \textit{vertex colouring} of $G$ provided that adjacent vertices have different colours. An \textit{$L$-colouring} of $V(G)$ is a vertex colouring $f : V(G) \rightarrow \col$ such that $f(v) \in L(v)$ for each $v \in V(G)$. An $L$-colouring is often called a \textit{list-colouring}. An \textit{independent set} $I$ of vertices of $G$ is a set of vertices which does not contain any neighbouring pair of vertices. We let $\alpha(L, \sigma, G)$ be the maximum number of vertices in an independent set of vertices of $G$, each of which contains $\sigma$ in its list. A necessary condition for there to exist an $L$-colouring of $G$ is that
\begin{equation}
\sum_{\sigma \in \col} \alpha(L, \sigma, H) \ge \size{V(H)} \tag{$*$} \label{histar} 
\end{equation}
for each subgraph $H$ of $G$. For a given subgraph $H$ of $G$, inequality \eqref{histar} is called \textit{Hall's Inequality for $H$}, and the condition that ``for each subgraph $H$ of $G$, the Hall Inequality for $H$ is satisfied'' is called \textit{\hc\ for $G$}.

In the special case where $G$ is a complete graph, \hc\ is sufficient to ensure that $H$ has an $L$-colouring. (This is Hall's Theorem \cite{hall}, and an $L$-colouring in this case is a system of distinct representatives.) \hc\ for graphs has been the subject of several recent papers (see \cite{cropper00, cropper02, daneshgar, deh, eslahchijohnson, hiltonjohnson90a, hiltonjohnson90b, hiltonjohnson99, hjw, hiltonvaughan, hallstrength}). In general, \hc\ for graphs is not sufficient to ensure there is a list-colouring. 

A latin square can be thought of as a proper vertex colouring of the cartesian product graph $K_n \square K_n$ with $\chi(K_n \square K_n) = n$ colours. This graph can be visualised as an \ntn\ grid in which the $n$ vertices in each row are joined to each other, as are the $n$ vertices in each column. However instead of giving a discussion in terms of graphs (which is easy enough to do) we shall give a more direct discussion using the more usual terminology for latin squares.

In a partial \ntn\ latin square $P_l$, some cells are filled from a set $\col$ of $n$ symbols and some are empty. We can define a list $L_l(v)$ for each cell $v$ as follows:

\[L_l (v) = \left\lbrace \begin{minipage}{9cm}
the symbol in cell $v$, if cell $v$ is filled; \vspace{2mm}

the set of symbols which are not used in the cells in the same row as $v$, nor in the cells in the same column as $v$, if cell $v$ is empty.
\end{minipage}\right.\]

A set of cells of $P_l$ is called \textit{independent} if no two cells occur in the same row or column. For a subset $Q$ of cells of $P_l$, we let $\alpha(L_l, \sigma, Q)$ be the maximum number of independent cells of $Q$ all of which contain $\sigma$ in their lists. \textit{Hall's Inequality} for a set $Q$ of cells of $P_l$ is that
\[ \sum_{\sigma \in \col} \alpha(L_l, \sigma, Q) \ge \size{Q},\]
where $\col$ is the set of $n$ symbols available to be used in $P_l$. \textit{\hc}\ is the collection of all $2^{n^2}$ inequalities obtained by letting $Q$ range over all subsets of $P_l$.

Hilton and Johnson in 1990 in \cite{hiltonjohnson90a} showed that Ryser's Theorem could be restated in the following way.

\begin{theorem1dash} Let $R_l$ be an \rts\ latin rectangle on the symbols \otn. Let $P_l$ be a partial \ntn\ latin square with $R_l$ in the top left-hand corner, and the remaining cells empty. Then $P_l$ can be completed to an \ntn\ latin square if and only if $P_l$ satisfies \hc\ for partial latin squares. \end{theorem1dash}

In this case we do not need the whole set of inequalities comprising \hc. In fact if the \hi\ for the whole \ntn\ partial latin square $P_l$ is satisfied then the partial latin square can be completed. Thus the $n$ inequalities of Ryser's Theorem can be replaced by just one inequality.

Before turning to \hc\ for Sudoku squares it is convenient to define Gerechte Designs and \hc\ for them. Let $(P_1, P_2, \dots, P_n)$ be a partition of the set of cells of an \ntn\ latin square in which $\size{P_i} = n$ for each $1 \le i \le n$. An \ntn\ latin square is a \textit{$(P_1, P_2, \dots, P_n)$-Gerechte Design} if each $P_i$ contains each symbol exactly once. In the case when $n=pq$ and each $P_i$ is a \ptq\ rectangle, then the Gerechte Design is a \pqs\ square.

From a graph theory point of view, a $(P_1, P_2, \dots, P_n)$-Gerechte Design can be thought of as a proper vertex colouring with $n$ colours of the graph obtained from the cartesian product graph $K_n \square K_n$ (the vertices here correspond to the cells, and the vertex sets of the first and second $K_n$'s correspond to the rows and columns respectively) by placing an edge between two vertices if the corresponding cells are in the same part $P_i$.

We define \hc\ for partial $(P_1, P_2, \dots, P_n)$-Gerechte Designs as follows. If $P_D$ is a partial $(P_1, P_2, \dots, P_n)$-Gerechte Design, for each cell $r$ we define a list in the following way.

\[L_D(v) = \left\lbrace \begin{minipage}{9cm}
the symbol in the cell $v$ of $P_D$, if cell $v$ is filled; \vspace{2mm}

the set of symbols which are not used in the cells in the same row as $v$, nor in same column as $v$, nor in the same part $P_i$ of the partition containing $v$.
\end{minipage}\right.\]

A set of cells of $P_D$ is said to be \textit{independent} if no two cells occur in the same row, or the same column, or the same part. For a subset $Q$ of the cells we let $\alpha(L_D, \sigma, Q)$ be the maximum number of independent cells of $Q$ all of which contain the symbol $\sigma$ in their lists. \textit{Hall's Inequality for $Q$} is
\[ \sum_{\sigma \in \col} \alpha(L_D, \sigma, Q) \ge \size{Q}, \]
and if Hall's Inequality is satisfied for each subset $Q$ of $P_D$, then $P_D$ is said to satisfy \textit{\hc\ for partial $(P_1, P_2, \dots, P_n)$-Gerechte Designs}, or, briefly, \textit{\hc\ for Gerechte Designs}.

Now we turn to discuss \hc\ for partial Sudoku squares. First let us note that we shall describe the \ptq\ subrectangles forming the \pqs\ square as the \textit{big (Sudoku) cells}, and the $1 \times 1$ subrectangles as the \textit{small (Sudoku) cells}. A \pqs\ square on a set $\col$ of $n=pq$ symbols can be thought of as a proper vertex colouring of a graph obtained from the cartesian product graph $K_n \square K_n$, in which, visualizing $K_n \square K_n$ as a grid, as described earlier, the \ptq\ subrectangles (i.e. the big cells) correspond to complete bipartite subgraphs $K_{p,q}$. If the first $K_n$ has vertices
\[ v_1, \dots, v_p, v_{p+1}, \dots, v_{2p}, \dots, v_{(q-1)p + 1}, \dots, v_{qp}, \]
 and the second $K_n$ has vertices
\[ w_1, \dots, w_q, w_{q+1}, \dots, w_{2q}, \dots, w_{(p-1)q + 1}, \dots, v_{pq}, \]
then the big Sudoku cells correspond to the $K_{p,q}$ on the vertices
\[v_{(x-1)p+1}, \dots, v_{xp} \qquad \text{and} \qquad w_{(y-1)q+1}, \dots, w_{yq}\]
($x = 1, 2, \dots, q$; $y = 1, 2, \dots, p$). In this case, for each small cell $v$, we define

\[L_S(v) = \left\lbrace \begin{minipage}{9cm}
the symbol in the cell $v$, if cell $v$ is filled; \vspace{2mm}

the set of symbols which are not used in the small cells in the same row as $v$, nor in small cells in the same column as $v$, nor in the big \ptq\ Sudoku cell containing $v$.
\end{minipage}\right.\]


A set of cells of $P_S$ is said to be \textit{independent} if no two cells occur in the same row, or the same column, or the same big cell. For a subset $Q$ of the cells we let $\alpha(L_S, \sigma, Q)$ be the maximum number of independent cells of $Q$ all of which contain the symbol $\sigma$ in their lists. \textit{Hall's Inequality for $Q$} is
\[ \sum_{\sigma \in \col} \alpha(L_S, \sigma, Q) \ge \size{Q}, \]
and if Hall's Inequality is satisfied for each subset $Q$ of $P_S$, then $P_S$ is said to satisfy \textit{\hc\ for partial Sudoku squares}.

We shall show that Theorem \ref{complicatedryser} (together with Theorem \ref{simpleryser}) can be reformulated as follows.

\begin{theorem3dash} Let $n=pq$. Let $R_S$ be an \rts\ \pqs\ rectangle on the symbol set $\col = \{1, 2, \dots, n\}$. Let $P_S$ be a partial \pqs\ square with $R_S$ in the top left-hand corner, and the remaining cells empty. Then $P_S$ can be completed to an \ntn\ \pqs\ square if and only if $P_S$ satisfies \hc\ for partial Sudoku squares. \end{theorem3dash}

\begin{figure}
\begin{center} \begin{tikzpicture}
\begin{scope}[scale=0.95]
\fill[gray!50] (0,0) rectangle (8.4,0.225)
(0,2.7) rectangle (8.4,3.6) (2.4,4.5) rectangle (3.6,5.4)
(7.2,2.025) rectangle (8.4,2.25);
\draw[gray,xstep=12mm, ystep=9mm] (0,0) rectangle (8.4, 5.4)
(0,0) grid (3.6,5.4)
(0,1.8)--(8.4,1.8) (0,2.7)--(8.4,2.7) (0,3.6)--(8.4,3.6)
(4.8,5.4)--(4.8,1.8)
(3.6,2.025)--(8.4,2.025) (3.6,2.25)--(8.4,2.25) (3.6,2.475)--(8.4,2.475) 
(0,0.225)--(8.4,0.225)
(4.8,1.8)--(4.8,2.7) (6,1.8)--(6,2.7) (7.2,1.8)--(7.2,2.7)
;
\draw[black] (0,0) rectangle (3.6,5.4) (1.8,2.25) node {$S$};
\draw[black, <->] (3.6,1.6) -- node[below] {$q$} (4.8,1.6);
\draw[black, <->] (0,5.6) -- node[above] {$s$} (3.5,5.6);
\draw[black, <->] (-0.2,5.4) -- node[left] {$r$} (-0.2,0);
\draw[black, <->] (0.2,2.7) -- node[right] {$p$} (0.2,3.6);
\draw[black, <-] (8.4,0.1125)--(9,0.1125) node[right] {small row};
\draw[black, <-] (8.4,2.1375)--(9,2.1375) node[right] {medium cell};
\draw[black, <-] (8.4,3.15)--(9,3.15) node[right] {big row};
\draw[black, <-] (3.6,4.95)--(5,4.95) node[right] {big cell};

\end{scope}
\end{tikzpicture} \end{center} \caption{Big and medium cells, big and small rows.} \label{bigmediumsmall}
\end{figure}
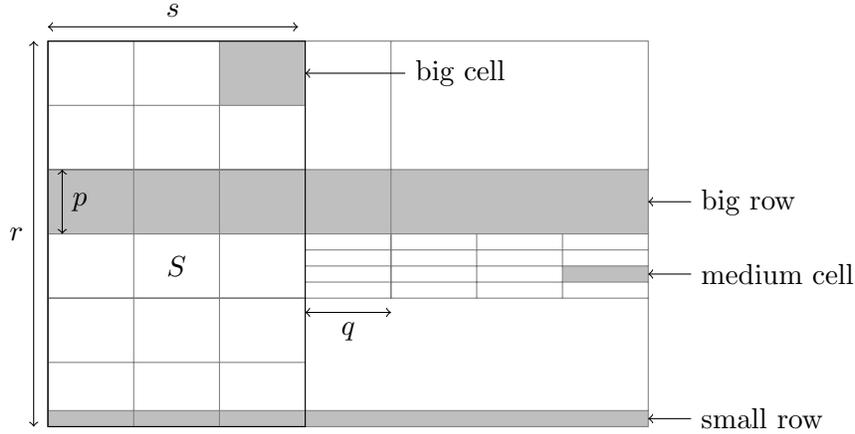

\section{Partial Sudoku Squares: the simple case}

In this section we prove Theorem \ref{simpleryser}. First we need some preliminaries.

A \textit{$k$-edge-colouring} of a finite multigraph $G$ is a map
\[\phi : E(G) \rightarrow \{1, 2, \dots, k\},\]
where $E(G)$ is the edge set of $G$. For any vertex $v \in V(G)$, the vertex set of $G$, and any $i \in \{1, \dots, k\}$, let $E_i(v)$ be the set of edges incident with $v$ of colour $i$. A $k$-edge-colouring is called \textit{equitable} if
\[ \big| \size{E_i(v)} - \size{E_j(v)} \big| \le 1 \; \text{for all $v \in V(G)$ and $i, j \in \{1, 2, \dots, k\}$.} \]

The following lemma is easy to prove; it is due independently to McDiarmid \cite{mcdiarmid} and de Werra \cite{dewerra}.

\begin{lemma} Let $k \ge 1$ be an integer, and let $G$ be a bipartite multigraph. Then $G$ has an equitable $k$-edge-colouring. \label{equitable} \end{lemma}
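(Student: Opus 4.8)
The plan is to argue by a discrete variational principle: attach to every $k$-edge-colouring $\phi$ the non-negative integer
\[ \Phi(\phi) = \sum_{v \in V(G)} \sum_{i=1}^{k} \size{E_i(v)}^2, \]
and show that a colouring minimising $\Phi$ must be equitable. Since $G$ is finite there are only finitely many $k$-colourings, so a minimiser exists. The governing observation is the elementary convexity fact that, for a fixed vertex $v$ with $\deg(v) = \sum_i \size{E_i(v)}$ held constant, the sum $\sum_i \size{E_i(v)}^2$ is strictly decreased by moving any two of the counts $\size{E_i(v)}$, $\size{E_j(v)}$ that differ by at least $2$ one step closer together. Thus it will suffice to show that \emph{any} colouring that is not equitable admits a recolouring strictly lowering $\Phi$; the minimiser can then have no such defect and is equitable.

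So I would take a non-equitable $\phi$, fix a vertex $u$ and colours $i,j$ with $\size{E_i(u)} \ge \size{E_j(u)} + 2$, and let $G_{ij}$ be the spanning subgraph of $G$ consisting of the edges coloured $i$ or $j$ (again a bipartite multigraph). The key sublemma I would establish is that $G_{ij}$ has a \emph{balanced} recolouring by the two colours $i,j$, meaning $\big|\,\size{E_i(v)} - \size{E_j(v)}\,\big| \le 1$ at every vertex $v$. Applying such a recolouring to $G_{ij}$ and leaving every edge of colour $c \ne i,j$ untouched fixes $\size{E_c(v)}$ for all $c \ne i,j$ and preserves $\size{E_i(v)} + \size{E_j(v)}$ at each $v$; by the convexity remark the quantity $\size{E_i(v)}^2 + \size{E_j(v)}^2$ never increases and strictly decreases wherever the old gap was at least $2$ (in particular at $u$). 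Hence $\Phi$ strictly decreases, which is the contradiction sought.

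It remains to prove the balanced two-colouring sublemma, and this is the step I expect to be the real obstacle, since it is exactly where bipartiteness must be used. I would treat each connected component $C$ of $G_{ij}$ separately. If $C$ has a vertex of odd degree, I adjoin a new vertex $z$ joined to every odd-degree vertex of $C$, making all degrees even; the augmented graph has an Eulerian circuit, which I choose to begin and end at $z$, and I colour its edges alternately $i,j,i,j,\dots$. Every vertex other than $z$ is, at each visit, entered and left by two consecutively coloured edges of opposite colours, so it meets exactly equally many edges of each colour; deleting $z$ then removes a single edge at each odd-degree vertex and leaves a discrepancy of at most $1$. If instead every vertex of $C$ has even degree, I take an Eulerian circuit of $C$ itself and again colour alternately; here the alternation must close up consistently, and this is guaranteed precisely because $C$ is bipartite, so the circuit has even length and every vertex meets exactly equal numbers of the two colours. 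This dichotomy proves the sublemma, and the convexity/termination argument above then yields an equitable $k$-edge-colouring. (The use of bipartiteness in the even case is essential and cannot be dispensed with, as the triangle with $k=2$ already shows.)
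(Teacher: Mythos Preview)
Your argument is correct. The paper itself does not prove this lemma: it simply states that it is ``easy to prove'' and attributes it independently to McDiarmid and de Werra, so there is no proof in the paper to compare against.

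Your variational approach via the potential $\Phi(\phi)=\sum_v\sum_i |E_i(v)|^2$ is one of the standard routes to this result; the heart of the matter --- obtaining a balanced $2$-colouring of a bipartite multigraph by alternately colouring an Eulerian circuit, with bipartiteness forcing even circuit length in the all-even-degree case --- is exactly the classical device used by de Werra. Your handling of a component containing an odd-degree vertex, by adjoining an auxiliary vertex $z$ and observing that the balance at $z$ is irrelevant since $z$ is deleted afterwards, is clean and correct. One small remark worth making explicit: in that case the augmented graph need not remain bipartite (odd-degree vertices may lie on both sides of the bipartition), but your argument there does not use bipartiteness, only the existence of an Eulerian circuit and the fact that every vertex other than $z$ is traversed in enter/leave pairs of consecutive, oppositely coloured edges --- so this is not a gap. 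The convexity step and the termination argument are both sound.
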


Let $L$ be a latin square of order $n$, and let $S = (p_1, \dots, p_s)$, $T = (q_1, \dots, q_t)$ and $U = (r_1, \dots, r_u)$ be three compositions of $n$. (A \emph{composition} of $n$ is an ordered set of positive integers which sum to $n$.) An \textit{$(S,T,U)$-amalgamation} of $L$ is an $s \times t$ array constructed as follows. We place a symbol $k$ in cell $(i,j)$ every time that a symbol from
\[ \{ r_1 + \cdots + r_{k-1} + 1, \dots, r_1 + \cdots + r_k\} \]
occurs in one of the cells $(\alpha, \beta)$ where
\[ \alpha \in \{ p_1 + \cdots + p_{i-1} + 1, \dots, p_1 + \cdots + p_i\} \]
and
\[ \beta \in \{ q_1 + \cdots + q_{j-1} + 1, \dots, q_1 + \cdots + q_j\}. \]

Let $n \ge 1$ be an integer and let $S = (p_1, \dots, p_s)$, $T = (q_1, \dots, q_t)$ and $U = (r_1, \dots, r_u)$ be three compositions of $n$. An \textit{$(S,T,U)$-outline latin square} of order $n$ is an $s \times t$ array containing $n^2$ symbols from $\{1, \dots, u\}$ (each cell may contain more than one symbol), such that the following conditions hold:

\begin{enumerate}[(i)]
\item row $i$ contains symbol $k$ $p_i r_k$ times;
\item column $j$ contains symbol $k$ $q_j r_k$ times;
\item cell $(i,j)$ contains $p_i q_j$ symbols, counting repetitions.
\end{enumerate}

It is easy to check that the $(S,T,U)$-amalgamation of a latin square of order $n$ is an $(S,T,U)$-outline latin square of order $n$. The following theorem, sometimes called the \textit{Amalgamated Latin Square Theorem}, shows that the converse is also true. It was proved by the second author in \cite{hilton80} in 1980. See \cite{andersenhilton1} and \cite{andersenhilton2} for earlier work by Andersen and the second author on somewhat similar lines, and \cite{hilton87} for a clearer account of the Amalgamated Latin Square Theorem.

\begin{theorem} Let $O$ be an $(S,T,U)$-outline latin square. Then there is a latin square $L$ such that $O$ is an $(S,T,U)$-amalgamation of $L$. \label{alst} \end{theorem}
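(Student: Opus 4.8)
The plan is to prove the theorem by de-amalgamating $O$ one step at a time, each step replacing a single composition part of size greater than $1$ by several parts of size $1$, until all three compositions have become $(1,\dots,1)$; at that point the array is literally a latin square of order $n$, and tracing the amalgamations backwards exhibits it as an $(S,T,U)$-amalgamation of the original $O$. Formally I would induct on the quantity $\sum_i(p_i-1)+\sum_j(q_j-1)+\sum_k(r_k-1)$. If this is $0$ then $S=T=U=(1,\dots,1)$, and conditions (i)--(iii) say exactly that each row and each column contains each symbol once and each cell holds a single symbol, i.e.\ $O$ is already a latin square, trivially its own amalgamation. All the work is in the inductive step.

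For the inductive step, suppose some part exceeds $1$; say (the cases of a column part or a symbol part being entirely symmetric) that $p_i>1$. I would split row $i$ into $p_i$ rows of size $1$ by a single equitable edge-colouring. Form the bipartite multigraph $G$ whose two vertex classes are the columns $1,\dots,t$ and the symbols $1,\dots,u$, placing, for each $j$ and $k$, as many parallel edges between column $j$ and symbol $k$ as there are copies of symbol $k$ in cell $(i,j)$. Then condition (iii) makes the degree of column $j$ equal to $p_iq_j$, and condition (i) makes the degree of symbol $k$ equal to $p_ir_k$. Apply Lemma \ref{equitable} with $k=p_i$ colours to obtain an equitable $p_i$-edge-colouring of $G$. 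The crucial point is that every vertex degree is an exact multiple of $p_i$, so ``equitable'' forces each colour class to meet column $j$ in precisely $q_j$ edges and symbol $k$ in precisely $r_k$ edges.

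Now I would replace row $i$ by $p_i$ new rows, the $\ell$-th receiving the symbols of cell $(i,j)$ whose corresponding edge got colour $\ell$. The new array has row composition obtained from $S$ by replacing $p_i$ with $p_i$ ones, and I would check that it is again an outline latin square: each new unit row $\ell$ contains symbol $k$ exactly $r_k$ times and holds $q_j$ symbols in column $j$ (these are the two exact-divisibility facts above), which are conditions (i) and (iii); condition (ii) is untouched, since splitting row $i$ merely redistributes its contents among the new rows and leaves every column's total symbol count unchanged. Moreover, re-merging the $p_i$ new rows recovers $O$, so the finer array amalgamates to $O$. The induction hypothesis then yields a latin square $L$ amalgamating to the finer array, and hence, by transitivity of amalgamation, to $O$ with the original compositions. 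The main obstacle --- and the reason Lemma \ref{equitable} is exactly the right tool --- is this exact-balance phenomenon: one must verify that conditions (i)--(iii) force all the relevant degrees to be divisible by the number of colours, so that the inevitable $\pm 1$ slack in an equitable colouring collapses to $0$ and the split preserves every outline condition on the nose. The column-splitting and symbol-splitting steps go through identically, using the bipartite multigraph on rows and symbols (degrees $p_{i'}q_j$ and $q_jr_k$, both multiples of $q_j$) and on rows and columns (degrees $p_ir_k$ and $q_jr_k$, both multiples of $r_k$) respectively.
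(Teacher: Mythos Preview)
The paper does not actually prove Theorem~\ref{alst}; it merely states the result and cites \cite{hilton80} (with \cite{hilton87} for a cleaner account), then uses it as a black box in the proofs of Theorems~\ref{simpleryser} and~\ref{complicatedryser}. So there is no in-paper proof to compare against.

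That said, your argument is correct and is precisely the standard de-amalgamation proof one finds in the cited sources: induct on the total excess $\sum(p_i-1)+\sum(q_j-1)+\sum(r_k-1)$, and at each step split a single fat row (or column, or symbol) into unit parts by an equitable edge-colouring of the appropriate bipartite multigraph. You have identified the one genuine issue and dealt with it cleanly: the outline conditions guarantee that every vertex degree in the auxiliary graph is an exact multiple of the number of colours, so equitability forces exact balance and the three outline conditions are reproduced verbatim after the split. The checks you give for the row case are right, and your sketched degree computations for the column and symbol cases are also correct (for the symbol split, condition~(iii) is automatic since splitting a symbol only relabels entries and does not change the multiset in any cell). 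The transitivity of amalgamation that you invoke at the end is immediate from the definition. Nothing is missing.
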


We now turn to the proof of Theorem \ref{simpleryser}, and show that if $n=pq$, $p|r$ and $q|s$, and if $R_S$ is an \rts\ \pqs\ rectangle on the symbols $1, 2 \dots, n$, then the partial \ntn\ Sudoku square containing $R_S$ in its top left-hand corner, the other cells being empty, can be completed to form an \ntn\ \pqs\ square.

Recall that we call the \ptq\ rectangles (some empty, some filled) of our partial \pqs\ square $P_S$ the big cells, and the $1 \times 1$ cells of $P_S$ the small cells. The big cells are arranged in rows which we shall call \textit{big rows}, and in columns which we shall call \textit{big columns}. Each of the $r/p$ big rows of $P_S$ consists of $s/q$ big cells which are filled, and $(n-s)/q$ big cells which are empty. Similarly with the first $s/q$ big columns.

We start by placing the symbols $1, 2, \dots, n$ in each of the big cells which are empty. Then, for the first $r/p$ big rows, working big row by big row, one after the other, we divide each of the big cells carefully into $p$ sets of $q$ small cells, all in the same small row. We shall call these sets \textit{medium cells}. (See Figure \ref{bigmediumsmall}.)

\begin{figure}
\begin{center} \begin{tikzpicture}
\fill[gray!50] (0,0) rectangle (2.5, 4.8);
\draw[gray] (0,1.6)--(6,1.6) (0,3.2)--(6,3.2) (2,0)--(2,4.8) (4,0)--(4,4.8) (2.5,0)--(2.5,4.8)
(2.5,1.8)--(4,1.8) (2.5,2.0)--(4,2.0) (2.5,2.2)--(4,2.2) (2.5,2.4)--(4,2.4) (2.5,2.6)--(4,2.6)
(2.5,2.8)--(4,2.8) (2.5,3.0)--(4,3.0);
\draw[black, <-] (4,2.4)--(5,2.4) node[right] {medium cells to be filled};
\end{tikzpicture} \end{center} \caption{One of the big cells in the $((s^*/q) + 1)$-th big column.} \label{oneofthebig}
\end{figure}
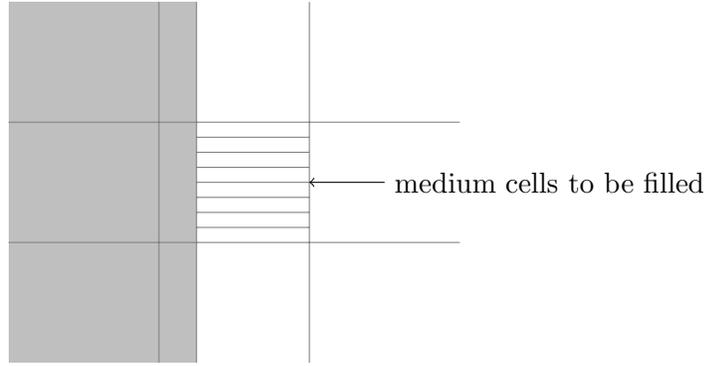

Our first task will be to distribute the $n$ symbols in the previously empty big cells in each of the first $r/p$ big rows amongst the medium cells, so that each medium cell contains $q$ symbols. The only constraint here is that each small row should contain each symbol exactly once. We do this one big row at a time. For this we construct a bipartite graph $G$ as follows. One vertex set contains $p$ vertices, say $r_1, \dots, r_p$, representing the $p$ small rows of some big row. The other vertex set contains $n$ vertices $w_1, \dots, w_n$ representing the symbols. The symbol vertex $w_i$ is joined to the row vertex $v_j$ by an edge if symbol $i$ is not contained in row $j$ of $S$. Each symbol vertex $w_j$ has degree $p-(s/q) = (n-s)/q$ and each row vertex $v_j$ has degree $n-s$. We give $G$ an equitable $k$-edge-colouring with colours $(s/q) + 1, \dots, p$. If symbol vertex $w_i$ is joined to row vertex $v_j$ by an edge coloured $k$, then we place symbol $i$ in row $j$ in the $k$-th big column. Since the edge-colouring is equitable, $q$ symbols are each placed in the $j$-th medium cell in big column $k$.

The analogous process is performed with the first $s/q$ big columns, and in this case the medium cells are vertical and contain $p$ symbols.

At this point it is easy to check that we have an $(S,T,U)$-outline latin square, where
\[ S = (1, 1, \dots, 1, p, p, \dots, p) \]
\[ T = (1, 1, \dots, 1, q, q, \dots, q) \]
\[ U = (1, 1, \dots, 1) \]
where $S$ has $r$ $1$'s and $(n-s)/q$ $q$'s. By Theorem \ref{alst}, there is a latin square $L$ of order $n$ of which our outline latin square is the $(S,T,U)$-amalgamation. Because each big cell was filled with the symbols $1, 2, \dots, n$, this latin square is in fact a Sudoku square. \qed

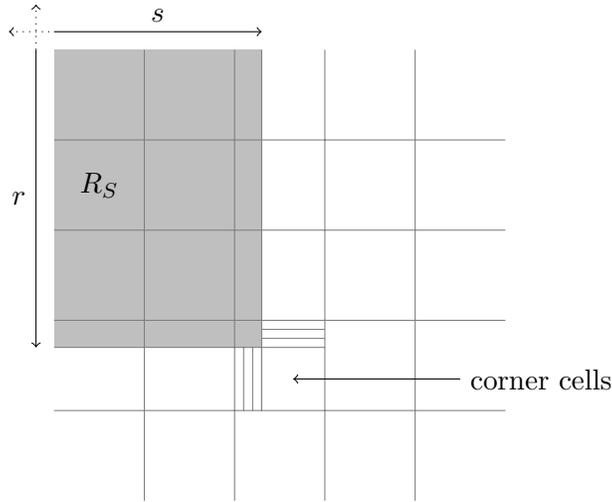
\begin{figure}
\begin{center} \begin{tikzpicture}
\begin{scope}[scale = 1.2]
\fill[gray!50] (0,1.7) rectangle (2.3, 5);
\draw[gray] (1,0)--(1,5) (2,0)--(2,5) (3,0)--(3,5) (4,0)--(4,5) (2.3,1)--(2.3,5)
(0,1)--(5,1) (0,2)--(5,2) (0,3)--(5,3) (0,4)--(5,4) (0,1.7)--(3,1.7)
(2.1,1)--(2.1,1.7) (2.2,1)--(2.2,1.7) (2.3,1)--(2.3,1.7)
(2.3,1.8)--(3,1.8) (2.3,1.9)--(3,1.9);
\draw[black] (0.5,3.5) node {$R_S$};
\draw[black, <-] (2.65,1.35)--(4.5,1.35) node[right] {corner cells};
\draw[black, <-] (-0.2,1.7) -- node[left] {$r$} (-0.2,5);
\draw[black, dotted, ->] (-0.2,5)--(-0.2,5.5);
\draw[black, ->] (0,5.2) -- node[above] {$s$} (2.3,5.2);
\draw[black, dotted, <-] (-0.5,5.2)--(0,5.2);

\end{scope}
\end{tikzpicture} \end{center} \caption{The partition of the corner cell.} \label{cornercell}
\end{figure}

\section{Partial Sudoku squares: the general case.}

In this section we prove Theorem \ref{complicatedryser}. Recall that, in this case, we shall assume that $p \nmid r$ or $q \nmid s$ (or both). Suppose $q \nmid s$. Then graphs $S_\alpha$ are defined for $\alpha = 1, 2, \dots, (s^*/q) + 1$ ($= \ceiling{s/q}$). The tactic for the proof is to apply Theorem \ref{alst} in much the same way as in the proof of Theorem \ref{simpleryser}, except that we have also to consider the first $\ceiling{r/p}$ partly filled big cells in the $((s^*/q) + 1)$-th big column, and, if $p \nmid r$, the first $(s^*/q) + 1$ partly filled big cells in the $((r^*/p) + 1)$-th big row.

\begin{proof}[Proof of Theorem \ref{complicatedryser}.]

We need to divide the unfilled part of the partly filled big cells into horizontally placed medium cells of size $1 \times (q - (s - s^*))$ for the first $r^*/p$ big cells (see Figure \ref{oneofthebig}), and, if $p \nmid r$, just the first $r - r^*$ horizontal medium cells in big cell $((r^*/p) + 1, (s^*/q) + 1)$. (See Figure \ref{cornercell}.) We need to assign the symbols that have not been used in the filled part of these big cells between the horizontal medium cells in such as way that no symbol occurs more than once in any small row. Once these medium cells are filled in the $((s^*/q) + 1)$-th big column, then we can fill the empty horizontal medium cells in the first $r^*/p$ big rows and the first $(r - r^*)$ rows in the $((r^*/p) + 1)$-th big row and in big columns $(s^*/q) + 2, \dots, p$ by applying the argument in the proof of Theorem~\ref{simpleryser}. This argument is not precisely the same because of the need to fill the horizontal medium cells in the small rows $r^* + 1, \dots, r$, but this slight difference presents no difficulties.

The matchings in the graphs $S_\alpha$, for $\alpha = 1, 2, \dots, (r^*/p) + 1$, are used to fill the horizontal medium cells in the $((s^*/q) + 1)$-th big column.

\begin{figure}
\begin{center} \begin{tikzpicture}
\begin{scope}[scale=0.85]
\draw[gray] (0.5,2.5) -- (3,3);
\draw[gray, loosely dotted] (0.5,7)--(0.5,6) (0.5,4)--(0.5,5) (0.5,0)--(0.5,1) (3,0)--(3,6.5) (0.5,3)--(0.5,2);

\fill[black] (0.5,0) circle (2pt) (0.5,1) circle (2pt)
(0.5,4) circle (2pt) (0.5,5) circle (2pt)
(0.5,2.5) circle (2pt)
(0.5,6) circle (2pt) (0.5,7) circle (2pt)

(3,0) circle (2pt) node[right] {$w_n$} (3,7) circle (2pt) node[right] {$w_1$}
(3,6.5) circle (2pt) node[right] {$w_2$}
(3,3) circle (2pt) node[right] {$w_j$};

\draw[black] (-3.5,0) node[right] {$v_{\alpha p,\,q - (s - s^*)}$} (-3.5,1) node[right] {$v_{\alpha p,\,1}$}
(-3.5,4) node[right] {$v_{(\alpha - 1)p + 2,\,q - (s - s^*)}$} (-3.5,5) node[right] {$v_{(\alpha - 1)p + 2,\,1}$}
(-3.5,6) node[right] {$v_{(\alpha - 1)p + 1,\,q - (s - s^*)}$} (-3.5,7) node[right] {$v_{(\alpha - 1)p + 1,\,1}$}
(-3.5,2.5) node[right] {$v_{(\alpha - 1)p + i,\,k}$};
\matrix [matrix of math nodes, left delimiter = \{, left, row sep=9mm] at (-2.7,6.5) {\  \\ \ \\ } ;
\matrix [matrix of math nodes, left delimiter = \{, left, row sep=9mm] at (-2.7,4.5) {\  \\ \ \\ } ;
\matrix [matrix of math nodes, left delimiter = \{, left, row sep=9mm] at (-2.7,0.5) {\  \\ \ \\ } ;

\draw[black] (0,-1) node[below] {the graph $S_\alpha$}
(7, -1) node[below, text width=40mm] {The $i$-th big cell in \\
big column $(s^*/q) + 1$.};
\begin{scope}[xshift=5cm]
\draw[gray] (0,0)--(0,4.8) (1,0)--(1,4.8) (3,0)--(3,4.8)
 (0,1.6)--(3,1.6) (0,1.8)--(3,1.8) (0,2.0)--(3,2.0)
(0,2.2)--(3,2.2) (0,2.4)--(3,2.4) (0,2.6)--(3,2.6)
(0,2.8)--(3,2.8) (0,3.0)--(3,3.0)
 (0,3.2)--(3,3.2) (0.2,1.6)--(0.2,3.2) (0.4,1.6)--(0.4,3.2) (0.6,1.6)--(0.6,3.2)
(0.8,1.6)--(0.8,3.2);
\draw[black, <-] (2,3.2)--(2,5.4) node[above] {medium cells};
\end{scope}
\end{scope}
\end{tikzpicture} \end{center} \caption{$S_\alpha$ and the cells the matching is used to fill (symbol $j$ goes in the $i$-th medium cell).} \label{salpha}
\end{figure}
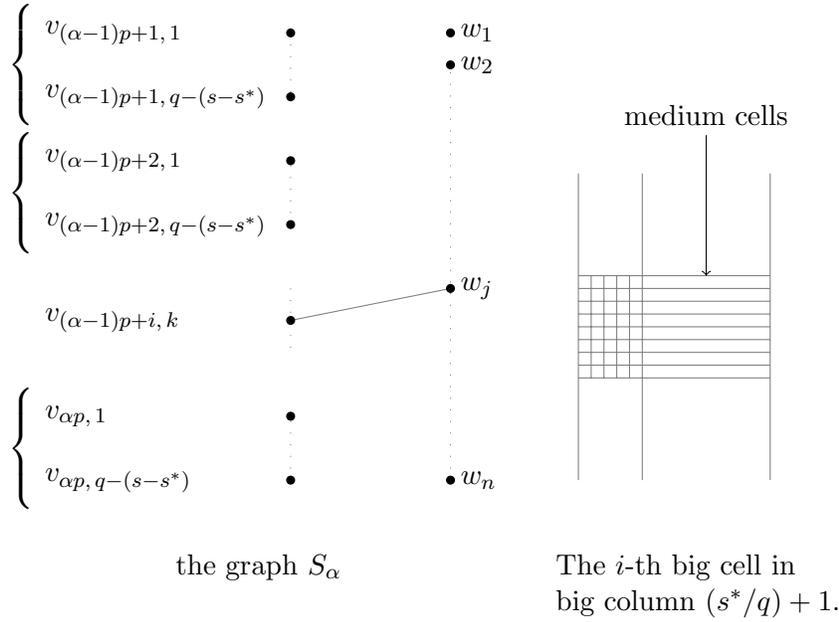

For $1 \le \alpha \le r^*/p$, if $S_\alpha$ has a matching $M_\alpha$, and if vertex $v_{(\alpha - 1)p + x, y}$ is joined to vertex $w_j$ by an edge of $M_\alpha$, then the symbol $j$ is placed in the $x$-th horizontal medium cell in the $\alpha$-th big row and the $((s^*/q) + 1)$-th big column. Exactly $q - (s - s^*)$ symbols are placed in this medium cell, and the symbols placed do not occur elsewhere in row $(\alpha - 1)p + x$. Moreover big cell $(\alpha, (s^*/q) + 1)$ contains each symbol exactly once. If $p \nmid r$ then the same technique is used to fill the $x$-th horizontal medium cell in row $r^* + x$ for $1 \le x \le r - r^*$ and big column $(s^*/q) + 1$, using the matching $M_{(r^*/p) + 1}$ in the graph $S_{(r^*/p) + 1}$. Now we apply the argument of Theorem~\ref{simpleryser} to fill all the remaining medium horizontal cells in big rows $1, \dots, r^*/p$, and the remaining medium horizontal cells in the first $r - r^*$ small rows in big row $(r^*/p) + 1$.

Next if $p \nmid r$ we apply the same argument using the matchings in the graphs $B_\beta$ ($1 \le \beta \le s^*/q)$ to fill in the vertical medium cells in big row $(r^*/p) + 1$ and big columns $1, 2, \dots, s^*/q$, and then to fill in the first $s - s^*$ vertical medium cells in big cell $((r^*/p) + 1, (s^*/q) + 1)$. (See Figure \ref{differentsizes}.) Finally, using the argument of Theorem~\ref{simpleryser} again, we fill all the remaining vertical cells in big columns $1, \dots, s^*/q$ and the first $s - s^*$ vertical medium cells in big column $(s^*/q) + 1$.

In the big cell $((r^*/p) + 1, (s^*/q) + 1)$ we have $(r - r^*) \times (s - s^*)$ single small cells, all filled, $s - s^*$ filled vertical medium cells of size $(p - (r - r^*)) \times 1$, $r - r*$ filled horizontal medium cells of size $1 \times (q - (s - s^*))$ and one empty cell of size $(p - (r - r^*)) \times (q - (s - s^*))$. We fill all the empty big cells with the symbols $1, 2, \dots, n$, and all the incompletely filled big cells with the symbols from $1, 2, \dots, n$ which do not already occur there.

One may easily check that we now have an $(S, T, U)$-outline latin square, where
\[ S = (1, 1, \dots, 1, p - (r - r^*), p, p, \dots, p) \]
\[ T = (1, 1, \dots, 1, q - (s - s^*), q, q, \dots, q) \]
\[ U = (1, 1, \dots, 1) \]
where $S$ has $r$ 1's and $q - (r^* + 1)$ $p$'s and $T$ has $s$ 1's and $p - (s^* + 1)$ $q$'s.

By Theorem \ref{alst}, there is a latin square $L$ of order $n$ of which our outline latin square is the $(S,T,U)$-amalgamation. Moreover, because the big cells all contain each of the symbols $1, 2, \dots, n$ exactly once, $L$ is a Sudoku square which extends $R_S$. This proves the necessity.

The sufficiency follows as the whole process can be reversed in a trivial way when $R_S$ is extended to a Sudoku square $L$. \end{proof}

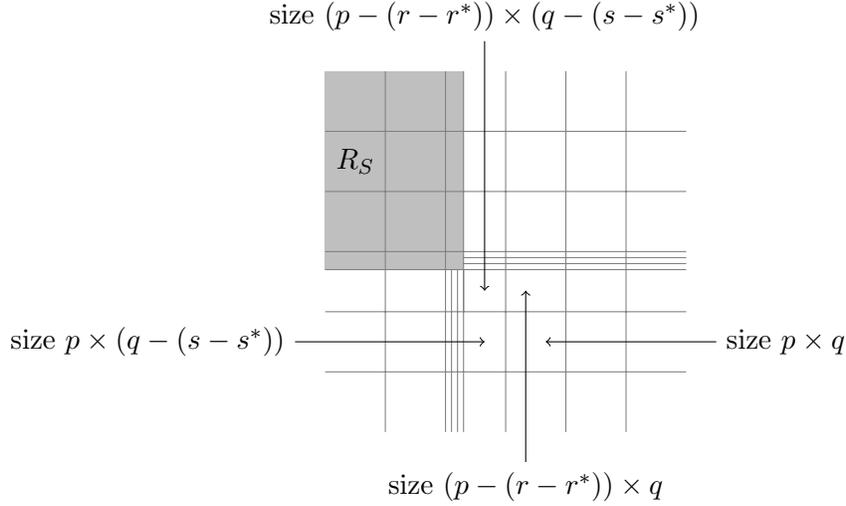
\begin{figure}
\begin{center} \begin{tikzpicture}
\begin{scope}[scale = 0.8]
\fill[gray!50] (0,1.7) rectangle (2.3, 5);
\draw[gray] (1,-1)--(1,5) (2,-1)--(2,5) (3,-1)--(3,5) (4,-1)--(4,5) (5,-1)--(5,5)
(2.3,1)--(2.3,5)
(0,0)--(6,0) (0,1)--(6,1) (0,2)--(6,2) (0,3)--(6,3) (0,4)--(6,4)
(2.1,-1)--(2.1,1.7) (2.2,-1)--(2.2,1.7) (2.3,-1)--(2.3,1.7)
(0,1.7)--(6,1.7) (2.3,1.8)--(6,1.8) (2.3,1.9)--(6,1.9);
\draw[black] (0.5,3.5) node {$R_S$};
\draw[black, <-] (3.333,1.35)--(3.333,-1.5) node[below] {size $(p - (r - r^*)) \times q$};
\draw[black, <-] (3.666,0.5)--(6.5,0.5) node[right] {size $p \times q$};

\draw[black, <-] (2.65,1.35)--(2.65,5.5) node[above] {size $(p - (r - r^*)) \times (q - (s - s^*))$};
\draw[black, <-] (2.65,0.5)--(-0.5,0.5) node[left] {size $p \times (q - (s - s^*))$};

\end{scope}
\end{tikzpicture} \end{center} \caption{Cells of different sizes.} \label{differentsizes}
\end{figure}

\section{Partial Sudoku Squares and \hc}

In this section we prove \threedash\ in which Theorem \ref{complicatedryser} is reformulated in terms of \hc\ for partial Sudoku squares. Let us remark that if $H$ is a subset of the set of cells of a partial Sudoku square, some of which are preassigned, then the \hi\ for $H$ is satisfied if and only if the \hi\ for the subset of $H$ consisting of its empty cells is satisfied.
This is easy to see, and is proved in slightly different circumstances in, for example, \cite{bghj}, \cite{hiltonvaughan} or \cite{hallstrength}. We shall just consider the most complicated case in \threedash\ where $p \nmid r$ and $q \nmid s$. The case when $p \mid r$ or $q \mid s$ is similar but easier; if $p \mid r$ and $q \mid s$ there is nothing to prove.

\begin{figure}
\begin{center} \begin{tikzpicture}
\begin{scope}[scale = 0.5]
\draw[gray] (0,0) rectangle (10,10)
(0,9)--(8,9) (6,10)--(6,3) (8,10)--(8,3)
(1,10)--(1,9) (2,10)--(2,9) (5,10)--(5,9)
(6,9)--(8,9) (6,8)--(8,8) (6,7)--(8,7) (6,3)--(8,3) (6,4)--(8,4);
\draw[black] (0.5,9.5) node {$1$} (1.5,9.5) node {$2$} (3.5,9.5) node {$\cdots$} (5.5,9.5) node {$x$}
(7,8.5) node {$x+1$} (7,7.5) node {$x+2$} (7,3.5) node {$n$};

\begin{scope}[xshift = 12cm]
\draw[gray] (0,0) rectangle (10,10)
(0,9)--(6,9)
(1,10)--(1,9) (2,10)--(2,9) (4,10)--(4,9) (6,10)--(6,9)
(6,9) rectangle (7,8) (7,8) rectangle (8,7) (9,6) rectangle (10,5);
\draw[gray, loosely dotted] (8,7)--(9,6);
\draw[black] (0.5,9.5) node {$1$} (1.5,9.5) node {$2$} (3,9.5) node {$\cdots$} (5,9.5) node {$x-1$}
(6.5,8.5) node {$x$} (7.5,7.5) node {$x$} (9.5,5.5) node {$x$};
\end{scope}
\end{scope}
\end{tikzpicture} \end{center} \caption{Two incomplete partial latin squares with $n$ cells preassigned.} \label{twoincomplete}
\end{figure}
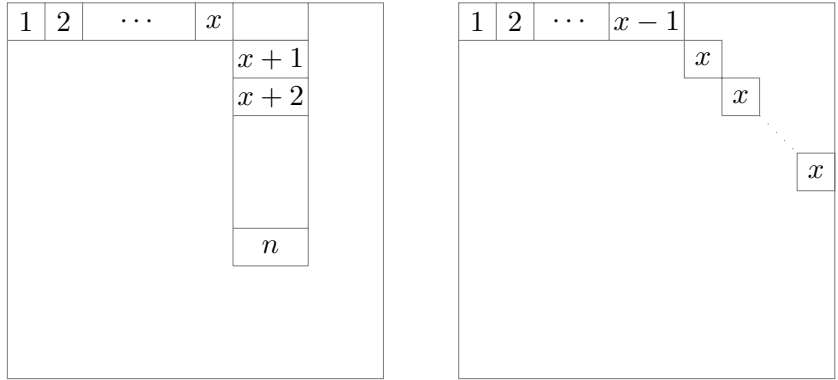

\begin{proof}[Proof of \threedashitalic.] By Theorem \ref{complicatedryser}, $R_S$ can be completed if and only if $R_S$ can be extended to an $(r^* + p) \times (s^* + q)$ \pqs\ rectangle, say $R_S^+$. So we need only focus on whether or not we can fill the remaining small cells in the $((s^*/q) + 1)$-th big column and $((r^*/p) + 1)$-th big row that already contain some preassigned cells. (We shall call the partially filled big cells the \textit{restricted big cells}.)

We know from Theorem \ref{complicatedryser} that these restricted big cells can be divided into appropriate horizontal and vertical medium cells if and only if the graphs $S_\alpha$ ($1 \le \alpha \le (r^*/p) + 1$) and $B_\beta$ ($1 \le \beta \le (s^*/q) + 1$) have matchings into the symbol vertices. (Note that the big cell $((r^*/p) + 1, (s^*/q) + 1)$ has only $r-r^*$ horizontal medium cells and $s-s^*$ vertical medium cells, and the corresponding matchings are smaller.)

By Hall's Theorem, a necessary and sufficient condition for the existence of such a matching $M_\alpha$ in $S_\alpha$ ($1 \le \alpha \le (r^*/p) + 1$) is that
\[ \bigcup_{v \in A} N(v) \ge \size{A} \]
for each subset $A$ of the set of (replicated) row vertices of $S_\alpha$ ($N(v)$ is the set of neighbours of a vertex $v$). But this is essentially the same as \hc\ for the $\alpha$-th restricted big cell in big column $(s^*/q) + 1$, namely
\[ \sum_{\sigma \in \{1, \dots, n\}} \alpha(L, \sigma, H) \ge \size{H} \]
for each subset $H$ of the restricted big cell. Here the $q - (s - s^*)$ replicates in $S_\alpha$ of a row vertex $v_{(\alpha - 1)p + j}$ in $S_\alpha^*$ correspond to the $q - (s - s^*)$ small cells in the $j$-th horizontal medium cell in the $\alpha$-th big cell in big column $(s^*/q) + 1$. A solution of just some of the row-vertex replicates corresponds to a subset $H$ containing just some of the small cells in the $j$-th medium cell. A symbol $\sigma$ is available for inclusion as an end-vertex of an edge of a matching, or in a cell of $H$, if and only if in at least one of the rows of $H$ there is no preassigned occurrence of the symbol. A symbol selected to be an end vertex of a matching may be selected to be placed in the corresponding row of $H$, and vice-versa. Thus in this case the two versions of \hc\ are essentially the same.

The argument for the matchings in the bottom graphs $B_\beta$ ($1 \le \beta \le (s^*/q) + 1$) is, of course, the same. \end{proof}

\section{Further comments}

\subsection{Analogues for Sudoku squares of results about partial latin squares}

\hc\ for graphs has been the object of a certain amount of study (see e.g. \cite{cropper00, cropper02, daneshgar, deh, eslahchijohnson, hiltonjohnson90b, hjw, hiltonvaughan, hallstrength}) and recently \hc\ for partial latin squares has been studied (see \cite{bghj, hiltonjohnson90a, hiltonvaughan, hallstrength}). In every case that has been looked at so far, whenever there is a theorem saying that a certain kind of partial latin square can be completed if and only if such and such a condition is satisfied, then the condition (whatever it might be) can be replaced by \hc. However, there are examples of classes of partial latin square for which \hc\ is not a sufficient condition for completion (it is always necessary).

It seems quite likely that there are analogues of these results for partial Sudoku squares. At present, we just have the example of Ryser's Theorem, given in this paper. It would also be interesting to construct classes of partial Sudoku squares for which \hc\ is not a sufficient condition for there to exist a completion.

It is worth remarking that there are really two natural kinds of partial Sudoku squares. In one kind the set of preassigned cells includes either none or all of the constituent small cells in any big cell (in the terminology of Section 3), and in the other this restriction is not made. Theorem \ref{simpleryser} considers partial Sudoku squares of the former type and Theorem \ref{complicatedryser} considers partial Sudoku squares of the latter type. In the next subsection we will look at two Sudoku variants of the Evans Conjecture.

\begin{figure}
\begin{center} \begin{tikzpicture}
\begin{scope}[scale=0.8]
\draw[gray,xstep=16mm, ystep=16mm] (0,-0.4) grid (6.4, 1.6);
\draw[black] (0.2,1.4) node {$1$} (0.6,1.4) node {$2$} (1.0,1.4) node {$3$} (1.4,1.4) node {$4$}
(1.8,1.0) node {$5$} (3.4,0.6) node {$5$} (5.0,0.2) node {$5$};
\end{scope}
\end{tikzpicture} \end{center} \caption{An incompletable partial Sudoku square.} \label{incompletepss1}
\end{figure}

\subsection{Two Sudoku analogues of the Evans Conjecture}

In 1960 Trevor Evans \cite{evans} posed the now famous conjecture that bears his name: What is the least number of preassigned elements an \ntn\ matrix can have and not be completable? The conjecture was $n$, and $n$ would be the best possible because of the configurations in Figure \ref{twoincomplete}. The Evans Conjecture was eventually proved independently by Smetaniuk \cite{smetaniuk} and Andersen and Hilton \cite{thankevans}. We can consider two variants of the Evans Conjecture for Sudoku squares. In the first, we ask:

\begin{problem} What is the least number of small cells that can be preassigned in a partial \ptq\ Sudoku square, so that it is not completable? \end{problem}

It seems reasonable to suppose that the answer is $p+q-1$, and this bound can be attained (see Figure \ref{incompletepss1}). (This question has also been asked by Antal Iv\'anyi \cite{ivanyi}.) Note that this includes the normal Evans Conjecture as a special case, as any latin square can be regarded as a $(1,n)$-Sudoku square.

The second variant of the Evans Conjecture for Sudoku squares, is one where the big cells are either empty or completely filled.



\begin{problem} What is the least number of big cells that can be preassigned in a partial \ptq\ Sudoku square, so that it is not completable? \end{problem}

We conjecture that the least number is $n=pq$. This bound can be attained by the following construction. Let $R_1, R_2, \dots, R_k$ be the rows of a $k \times k$ matrix $M$ filled with the symbols $1, 2, \dots, k^2$, each symbol occurring once. Let $M_1 = M$ and, for $2 \le i \le k$, let $M_i$ be the matrix
\[ M_i = \begin{pmatrix} R_i \\ R_{i+1} \\ \vdots \\ R_n \\ R_1 \\ \vdots \\ R_{i-1} \end{pmatrix} \]
Consider the partial $k^2 \times k^2$ Sudoku square in Figure \ref{incompletepss2}. Then, looking at the first row and $ik$-th column, we see that there is no symbol that can be placed in cell $(1, ik)$.

\begin{figure}
\begin{center} \begin{tikzpicture}
\begin{scope}[scale = 0.6]
\draw[gray] (0,0) rectangle (10,10)
(0,9)--(8,9) (6.5,10)--(6.5,3) (8,10)--(8,3)
(1,10)--(1,9) (2,10)--(2,9) (5,10)--(5,9)
(6.5,9)--(8,9) (6.5,8)--(8,8) (6.5,7)--(8,7) (6.5,3)--(8,3) (6.5,4)--(8,4);
\draw[black] (0.5,9.5) node {$M_1$} (1.5,9.5) node {$M_2$} (3.5,9.5) node {$\cdots$} (5.75,9.5) node {$M_{i-1}$}
(7.25,8.5) node {$M_i^\mathrm{T}$} (7.25,7.5) node {$M_{i+1}^\mathrm{T}$} (7.25,3.5) node {$M_n^\mathrm{T}$};

\end{scope}
\end{tikzpicture} \end{center} \caption{An incompletable partial Sudoku square.} \label{incompletepss2}
\end{figure}

\subsection{Orthogonal latin squares and \hc}

Suppose that $(A,B)$ is a pair of \ntn\ orthogonal latin squares on the symbols $1,2,\dots,n$. For $1 \le i \le n$, let $P_i$ be the set of cells of $B$ that contain the symbol $i$. Then, obviously, $(P_1, \dots, P_n)$ is a partition of the set of cells of an \ntn\ matrix such that $\size{P_i} = n$. Therefore $A$ is a $(P_1, \dots, P_n)$-Gerechte Design. In general, any latin square with an orthogonal mate can be thought  as a kind of Gerechte Design. Given a latin square $A$ with an associated partition $(P_1, \dots, P_n)$ corresponding to an orthogonal mate, we call $A$ a \textit{$(P_1, \dots, P_n)$-orthogonal latin square design}.

The partition $(P_1, \dots, P_n)$ itself has the properties:

\begin{enumerate}[(i)]
\item We can form a latin square $B$ by placing symbol $i$ in the cells of $P_i$ ($1 \le i \le n$).
\item There is a way of placing the symbols $1, 2, \dots, n$ each exactly once in the cells of $P_i$ ($1 \le i \le n$) so that we form another latin square $A$.
\end{enumerate}

Then, of course, $(A,B)$ is a pair of orthogonal latin squares. Any partition $(P_1, \dots, P_n)$ with properties (i) and (ii) is an \textit{OLS-partition of the \ntn\ matrix}. (OLS stands for ``orthogonal latin square'').

A partial $(P_1, \dots, P_n)$-OLS design is a partial latin square which is a partial $(P_1, \dots, P_n)$-Gerechte Design. Thus in a partial $(P_1, \dots, P_n)$-OLS design, we are given the partition $(P_1, \dots, P_n)$, and we have a partial latin square in which each $P_i$ contains each symbol at most once.

The notion of \textit{\hc\ for a partial $(P_1, \dots, P_n)$-OLS design} is just a particular special case of \hc\ for partial $(P_1, \dots, P_n)$-Gerechte Design defined in Section 1. We shall refer to it briefly as \textit{\hc\ for OLS-designs}.

It would be fascinating if there were results analogous to Ryser's Theorem for OLS-designs. Our theorem about Sudoku squares suggests the following possibility:

\begin{problem} Let $(P_1, \dots, P_n)$ be an OLS-partition of an \ntn\ matrix of cells. Suppose that we have an \ntn\ partial latin square $P_\vartheta$ in which $P_1, \dots, P_r$ are filled with each $P_i$ ($1 \le i \le r$) containing each symbol exactly once. Is it true that we can complete the partial latin square in such a way that each of $P_{r+1}, \dots, P_n$ also contain each symbol exactly once if and only if $P_\vartheta$ satisfies \hc\ for partial $(P_1, \dots, P_n)$-OLS designs? \end{problem}

\subsection{Complexity}

We do not know the complexity of the decision problem: given a partial latin square, decide if it satisfies \hc\ for latin squares. This problem is known to be in co-NP \cite{hiltonvaughan}, but it is not known to be in NP. We can ask the same question about the complexity of the corresponding problem for Sudoku squares: given a partial Sudoku square, decide if it satisfies \hc\ for Sudoku squares.

Colbourn \cite{colbourn} showed that the following decision problem is NP-complete: given a partial latin square, decide if it can be completed. Yato and Seta \cite{yatoseta} showed that the corresponding problem for Sudoku squares is also NP-complete: given a partial Sudoku square, decide if it can be completed. We do not know if the problem remains NP-complete if the big cells are required to be either filled or empty, although it seems reasonable to suppose that this is indeed the case.

In \cite{vaughan} it is shown that deciding if a partial Gerechte Design is completable is NP-complete, even if the partial Gerechte Design contains no filled cells at all.

Hilton and Vaughan \cite{hiltonvaughan} have looked at the decision problem: given a partial latin square satisfying \hc\ for latin squares, decide if it can be completed. They showed that this problem is NP-hard. We do not know if this is true of the corresponding problem for Sudoku squares: given a partial Sudoku square satisfying \hc\ for Sudoku squares, decide if it can be completed.

A maximum matching in a bipartite graph on $n$ vertices can be found in $O(n^3)$ time (see e.g. \cite[Chapter 20]{schrijver}), so there 
is a polynomial algorithm to decide if a bipartite graph has a perfect matching. To decide if an \rts\ partial Sudoku square, as considered in Theorem \ref{complicatedryser}, can be completed, we need to decide if each of $n$ bipartite graphs has a perfect matching. Consequently, the problem of deciding if such a partial Sudoku squares can be completed can be performed in polynomial time.

\end{document}